\newtheorem{thm}{Theorem}[section]
\newtheorem{lemma}[thm]{Lemma}
\newtheorem{proposition}[thm]{Proposition}
\declaretheorem[numbered=no]{Main Theorem}
\theoremstyle{definition}
\newtheorem{remark}[thm]{Remark}
\newcounter{smallromans}
\newenvironment{romanenumerate}
{\begin{list}{{\normalfont\textrm{(\roman{smallromans})}}}%
  {\usecounter{smallromans}\setlength{\itemindent}{0cm}%
   \setlength{\leftmargin}{5.5ex}\setlength{\labelwidth}{5.5ex}%
   \setlength{\topsep}{.5ex}\setlength{\partopsep}{.5ex}%
   \setlength{\itemsep}{0.1ex}}}%
{\end{list}}
\newcounter{smallromansdash}
\newcounter{bigromans} 
  {\end{list}}
\begin{document}
\title[The trace as an average]{The trace as an average over the unit sphere of\\
a normed space with a 1-symmetric basis}
\begin{abstract}We generalise the formula expressing the matrix trace of a given square matrix as the integral of the numerical values of $A$ over the Euclidean sphere to the unit spheres of finite-dimensional normed spaces that have a 1-symmetric basis. Our result is new even in the case of $\ell_p$-norms in $\mathbb{R}^N$ for $p\neq 2$.\end{abstract}

\author[T.~Kania]{Tomasz Kania}
\address{School of Mathematical Sciences, Western Gateway Building, University College Cork, Cork, Ireland}
\email{tomasz.marcin.kania@gmail.com}

\author[K.~E.~Morrison]{Kent E.~Morrison}
\address{American Institute of Mathematics, 600 E.~Brokaw Road, San Jose, CA 95112}
\email{morrison@aimath.org}

\subjclass[2010]{Primary 15A60, 47A12}
\keywords{matrix trace, numerical range, hypersurface measure, hyperoctahedral group} 
\date{\today}
\maketitle

\section{Introduction and the main result}\label{intro}
The fact that the trace ${\rm tr}\,A = a_{11} + a_{22} + \ldots + a_{NN}$ of a given real or complex square matrix $A=[a_{ij}]_{i,j\leqslant N}$ may be expressed as \begin{equation}\label{trfor}{\rm tr}\,A =N\cdot \int\limits_{S} \langle Ax, x\rangle \, \mu({\rm d}x),\end{equation} where $\mu$ is the normalised hypersurface measure on the Euclidean sphere $S$ in $\mathbb{R}^N$ (or $\mathbb{C}^N$), has become a part of the mathematical folklore. Recently, the first-named author presented a proof of the above formula based on algebraic properties of the trace (\cite{Kania}). Equation \eqref{trfor} is a particular case of a more general formula for symmetric 2-tensors on Riemannian manifolds (see, \emph{e.g.}, \cite{lg}).\smallskip

The purpose of this note is to extend formula \eqref{trfor} to the unit spheres of finite-dimensional normed spaces $X$ which are not necessarily manifolds but whose symmetry groups are large enough to accommodate the group of symmetries of a hypercube or of a cross-polytope of the dimension equal to the dimension of $X$. This hypothesis is formalised in the language of Banach space theory---we postpone the explanation of all necessary terminology to Section~\ref{prelim}.\smallskip

Our main result then reads as follows.

\begin{Main Theorem}Let $X$ be a finite-dimensional normed space with a 1-symmetric basis. Let $S_X$ be the unit sphere of $X$ and let $\mu$ be the normalised hypersurface measure on $S_X$. \begin{romanenumerate}\item Let $x_0\in S_X$. Denote by $x_0^*$ the norming functional for $x_0$ in the case where it is unique. Then the function 
$$x\mapsto x^*\quad (x\in S_X)$$
is defined on a subset $D\subseteq S_X$ such that $\mu(S_X\setminus D)=0$ and is continuous on $D$.
\item If $A$ is an $(N\times N)$-matrix, where $N=\dim X$, then
\begin{equation}\label{new}{\rm tr}\,A = N\cdot \int\limits_{S_X} \langle Ax, x^*\rangle\, \mu({\rm d}x).\end{equation}\end{romanenumerate} \end{Main Theorem}
We prove the Main Theorem in Section~\ref{proof}.
\begin{remark}Formally, formula \eqref{new} should read \begin{equation}\label{new2}{\rm tr}\,A = N\cdot \int\limits_{D} \langle Ax, x^*\rangle\, \mu({\rm d}x),\end{equation}
however it does not really matter as $\mu(S_X \setminus D)=0$, hence we understand by $x\mapsto x^*$ ($x\in S_X$)  any measurable extension of $x\mapsto x^*$ ($x\in S_X\setminus D$) to a function $S_X\to S_{X^*}$.\end{remark}\smallskip

\section{Preliminaries and auxiliary results}\label{prelim}
We shall work with real normed spaces however all the results in this note generalise to the complex scalars in a straightforward manner.

\subsection{1-symmetric bases and the hyperoctahedral groups}
Let $X$ be a finite-dimen\-sio\-nal normed space with a normalised basis $(x_n)_{n=1}^N$. We say that the basis $(x_n)_{n=1}^N$ is \emph{1-symmetric} if
$$\left\|\sum_{k=1}^N a_k x_k\right\|_X = \left\|\sum_{k=1}^N \varepsilon_k a_{k} x_{\sigma(k)}\right\|_X$$
for every choice of scalars $a_1, \ldots, a_N$, $\varepsilon_1, \ldots, \varepsilon_N$ with $|\varepsilon_j|=1$ ($j\leqslant N$) and every permutation $\sigma$ of $\{1, \ldots, N\}$. In other words, $(x_n)_{n=1}^N$ is 1-symmetric if the hyperoctahedral group $\mbox{BC}_N$ of degree $N$ acts on the linear combinations of $(x_n)_{n=1}^N$ by isometries. Here, by $\mbox{BC}_N$, we understand the group of $(N\times N)$-matrices that have exactly one non-zero entry in each row and exactly one non-zero entry in each column, and the non-zero entries are either $1$ or $-1$. \smallskip

We remark in passing that the paradigm examples of spaces with a 1-symmetric basis are the finite-dimensional $\ell_p^N$ spaces for $p\in [1,\infty]$. This class of spaces is however much broader and one usually constructs such spaces as ranges of initial basis projections of infinite-dimensional Banach spaces with a 1-symmetric Schauder basis. \smallskip

Let us now prove a lemma concerning the hyperoctahedral group which may be of interest in itself.

\begin{proposition}\label{mainlemma}Let $N\in \mathbb{N}$ and let $A$ be an $(N\times N)$-matrix. Then
$$\sum_{Q\in {\rm BC}_N} Q^TAQ  = \Big((N-1)!\cdot 2^N\cdot {\rm tr}\,A\Big)I_N,$$
where $I_N$ denotes the $N\times N$ identity matrix.\end{proposition}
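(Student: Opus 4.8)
The plan is to exploit the invariance of the matrix $M:=\sum_{Q\in{\rm BC}_N} Q^TAQ$ under the conjugation action of ${\rm BC}_N$ on itself, deduce that $M$ is a scalar multiple of the identity, and then pin down the scalar by a trace computation. The first step is a reindexing of the group sum: for any fixed $R\in{\rm BC}_N$, since $Q\mapsto QR$ is a bijection of ${\rm BC}_N$, one has
$$R^TMR=\sum_{Q\in{\rm BC}_N}(QR)^TA(QR)=\sum_{Q'\in{\rm BC}_N}(Q')^TAQ'=M.$$
As every $R\in{\rm BC}_N$ is orthogonal, this rearranges to $MR=RM$ for all $R\in{\rm BC}_N$.

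Next I would use that ${\rm BC}_N$ is rich enough to force $M$ to be scalar. It contains every diagonal matrix with entries $\pm1$; conjugating $M$ by the one that flips only the $i$-th coordinate multiplies the $(i,j)$ entry of $M$ by $-1$ for each $j\ne i$, so commutation forces all off-diagonal entries of $M$ to vanish, i.e.\ $M$ is diagonal. It also contains every permutation matrix, and a diagonal matrix commuting with the transposition matrix for $(i\ j)$ must have equal $(i,i)$ and $(j,j)$ entries; hence $M=cI_N$ for some scalar $c$. Finally, taking traces and using that the trace is conjugation-invariant,
$$\operatorname{tr}M=\sum_{Q\in{\rm BC}_N}\operatorname{tr}(Q^TAQ)=|{\rm BC}_N|\cdot\operatorname{tr}A=2^N\,N!\cdot\operatorname{tr}A,$$
while $\operatorname{tr}(cI_N)=cN$, so $c=2^N(N-1)!\cdot\operatorname{tr}A$, which is the asserted identity.

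I do not expect a real obstacle here; the only point needing a little care is the step identifying $M$ as scalar, where one must invoke both the diagonal sign changes \emph{and} the permutations inside ${\rm BC}_N$. A reader preferring an explicit route can instead parametrise $Q\in{\rm BC}_N$ by $Qe_j=\varepsilon_j e_{\sigma(j)}$ with $\varepsilon\in\{\pm1\}^N$ and $\sigma\in S_N$, check that $(Q^TAQ)_{jk}=\varepsilon_j\varepsilon_k\,a_{\sigma(j),\sigma(k)}$, sum over $\varepsilon$ (which annihilates the terms with $j\ne k$ and contributes a factor $2^N$ on the diagonal), and then sum over $\sigma$ (for fixed $j$ each value of $\sigma(j)$ is attained exactly $(N-1)!$ times), arriving again at $2^N(N-1)!\cdot\operatorname{tr}A$ on the diagonal and $0$ off it.
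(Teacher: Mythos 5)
Your proof is correct, but your primary argument takes a genuinely different route from the paper. The paper proceeds by direct computation: it parametrises each $Q\in{\rm BC}_N$ by a pair $(\sigma,\beta)$ with $Qe_i=\beta(i)e_{\sigma(i)}$, computes the $(i,j)$ entry of $Q^TAQ$ as $\beta(i)\beta(j)a_{\sigma(i),\sigma(j)}$, and then sums: the sum over the signs $\beta$ annihilates every off-diagonal entry and contributes a factor $2^N$ on the diagonal, after which the sum over $\sigma$ produces $(N-1)!\sum_i a_{ii}$. This is precisely the ``explicit route'' you sketch in your closing paragraph. Your main argument is instead the averaging (Schur's lemma) argument: the reindexing $Q\mapsto QR$ shows $M=\sum_Q Q^TAQ$ commutes with every element of ${\rm BC}_N$, the diagonal sign matrices force $M$ to be diagonal, the transpositions force the diagonal entries to be equal, and the conjugation-invariance of the trace pins down the scalar as $2^N(N-1)!\,{\rm tr}\,A$. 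Both arguments are complete and correct. Your approach buys conceptual clarity and generality --- it isolates exactly what is used, namely that the natural representation of ${\rm BC}_N$ on $\mathbb{R}^N$ is irreducible, so the same argument works verbatim for any group with that property --- while the paper's computation is more elementary and self-contained, requiring no commutant argument.
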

\begin{proof}Let $e_1, \ldots, e_N$ be the standard basis of $\mathbb{R}^N$. For each $Q\in {\rm BC}_N$ there exists a unique pair $(\sigma^Q, \beta^Q)$, where $\sigma^Q$ is a permutation of $\{1, \ldots, N\}$ and $\beta^Q\colon \{1, \ldots, N\} \to \{-1, 1\}$ such that $$Qe_i = \beta^Q(i)e_{\sigma^Q(i)}\quad (i\leqslant N).$$ Given $Q\in {\rm BC}_N$, it is straightforward to check that the $(i,j)$ entry of $Q^TAQ$ is $$\beta^Q(i)\beta^Q(j)a_{\sigma^Q(i), \sigma^Q(j)}.$$ \smallskip
If $i=j$, then the $(i,i)$ entry of $\sum_{Q\in {\rm BC}_N} Q^TAQ$ is 
\begin{equation*}\begin{split}\sum_{(\sigma^Q,\, \beta^Q)} a_{\sigma(i), \beta(i)} & = \sum_{\sigma^Q} \sum_{\beta^Q} a_{\sigma(i), \sigma(i)} \\
& = \sum_{\sigma \in {\rm BC}_N} 2^N a_{\sigma(i), \sigma(i)} \\
& = \sum_{i=1}^N (N-1)!\cdot 2^N a_{i,i} \\
& = (N-1)! \cdot 2^N \cdot {\rm tr}\,A.\end{split}\end{equation*}
If $i\neq j$, then the $(i,j)$ entry of $\sum_{Q\in {\rm BC}_N} Q^TAQ$ is 0 because 
$$\sum_{(\sigma^Q,\, \beta^Q)} \beta^Q(i)\beta^Q(j)a_{\sigma^Q(i), \sigma^Q(j)} = \sum_{\sigma^Q}\sum_{\beta^Q} \beta^Q(i)\beta^Q(j) a_{\sigma^Q(i), \sigma^Q(j)}, $$
and the inner sums over $\beta^Q$ are all zero.\end{proof}

\subsection{Duality in finite-dimensional normed spaces}
Let $X$ be a finite-dimensional normed space. Then, $X$ is isomorphic as a vector space to $\mathbb{R}^N$ for some $N\in \mathbb{N}$. Henceforth, we shall regard $X$ as $\mathbb{R}^N$ endowed with a norm that comes from $X$ and we shall assume that the canonical basis of $\mathbb{R}^n$ is 1-symmetric with respect to that norm. Since the dual space $X^*$ is $N$-dimensional, we will regard it as a vector space as $\mathbb{R}^N$ too. Then the map $\Phi_{\langle \cdot, \cdot \rangle}\colon X\to X^*$ given by 
$$\Phi_{\langle \cdot, \cdot \rangle}\colon x\mapsto (f\mapsto \langle f,x\rangle\, (f\in X^*)),$$
where $\langle \cdot, \cdot \rangle$ denotes the the canonical inner product in $X\cong \mathbb{R}^N$, is an isomorphism of vector spaces. Under this identification, the duality between $X$ and $X^*$ is given by the scalar product, that is
$$f(x)  = \langle f,x\rangle = \sum_{j=1}^N x_jf_j \quad (x=(x_j)_{j=1}^N\in X, f=(f_j)_{j=1}^N\in X^*).$$
We shall use this identification hereinafter.\smallskip

\subsection{Uniqueness of norming functionals}
Let $X$ be a Banach space, $x_0\in X$ be a non-zero vector and let $f\in X^*$ be a norm-one functional. Then $f$ is \emph{a norming functional for} $x_0$, if $\langle f,x_0\rangle = \|x_0\|_X$. The Hahn--Banach theorem guarantees the existence of at least one norming functional for each non-zero vector in a Banach space. Such functional however need not be unique. We will demonstrate that the set of norm-one elements in a finite-dimensional normed space that have more than one norming functional is null with respect to the hypersurface measure on the unit sphere. Before we state and prove the result, we require two standard facts from analysis.\smallskip

The first one is a well-known characterisation of vectors with unique norming functionals in terms of smoothness of the norm. The proof can be found, \emph{e.g.}, in \cite[pp.~179--180]{Beauzamy}.
\begin{proposition}\label{norming}Let $X$ be a Banach space and let $x_0$ be a non-zero vector in $X$. Then $x_0$ has a unique norming functional if and only if the map $x\mapsto \|x\|_X\; (x\in X)$ is G\^{a}teaux-differentiable at $x_0$.\end{proposition}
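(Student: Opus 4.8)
The plan is to recognise this as the classical characterisation of points where the norm is smooth and to prove it by hand through one-sided directional derivatives, which keeps the note self-contained. Write $\varphi(x)=\|x\|_X$; this is a convex, $1$-Lipschitz function, and the norming functionals for a non-zero $x_0$ are precisely the $f\in X^*$ with $\|f\|\leqslant 1$ and $\langle f,x_0\rangle=\|x_0\|_X$ (any such $f$ automatically has $\|f\|=1$), that is, precisely the elements of the subdifferential of $\varphi$ at $x_0$. Thus the proposition is the instance, for the norm, of the general principle that a continuous convex function is G\^{a}teaux-differentiable at a point exactly when its subdifferential there is a singleton.

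First I would introduce, for $h\in X$, the one-sided directional derivative
$$p(h)=\lim_{t\to 0^+}\frac{\|x_0+th\|_X-\|x_0\|_X}{t}.$$
Convexity of $\varphi$ makes the difference quotient $t\mapsto(\|x_0+th\|_X-\|x_0\|_X)/t$ non-decreasing on $(0,\infty)$, so this limit exists and equals the infimum over $t>0$; from this one reads off that $p$ is sublinear (positively homogeneous and subadditive), that $p(h)\leqslant\|x_0+h\|_X-\|x_0\|_X\leqslant\|h\|_X$, and that $p(x_0)=\|x_0\|_X$ while $p(-x_0)=-\|x_0\|_X$. Since the left-hand directional derivative in direction $h$ equals $-p(-h)$, the norm is G\^{a}teaux-differentiable at $x_0$ if and only if $p(-h)=-p(h)$ for every $h$, i.e.\ if and only if $p$ is linear.

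The heart of the matter is the equivalence: $f\in X^*$ is a norming functional for $x_0$ if and only if $-p(-h)\leqslant f(h)\leqslant p(h)$ for all $h\in X$. For one direction, substitute $x_0\pm th$ into $\langle f,\cdot\rangle\leqslant\|\cdot\|_X$ and let $t\to 0^+$; for the other, take $h=x_0$ to obtain $\langle f,x_0\rangle=\|x_0\|_X$ and use $f(h)\leqslant p(h)\leqslant\|h\|_X$ to get $\|f\|\leqslant 1$. Granting this equivalence, if $p$ is linear then $-p(-\cdot)=p$, so the only functional caught between the two bounds is $f=p$ itself, giving a \emph{unique} norming functional. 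Conversely, suppose $p$ is not linear, so there is $h_0$ with $p(h_0)>-p(-h_0)$ (one always has $p(h_0)+p(-h_0)\geqslant p(0)=0$). On the line $\mathbb{R}h_0$ the linear functional sending $h_0$ to $p(h_0)$, and the one sending $h_0$ to $-p(-h_0)$, are both dominated by $p$ there — this is the only computation to be done, and it reduces to $p(h_0)+p(-h_0)\geqslant 0$ — so by Hahn--Banach each extends to a linear functional on $X$ dominated by $p$; by the equivalence both are norming functionals for $x_0$, and they disagree at $h_0$, so there is more than one.

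The step I expect to be the main obstacle is the converse (smoothness from uniqueness): one must manufacture two distinct norming functionals from the failure of linearity of $p$, which requires the small Hahn--Banach extension described above together with the check that the prescribed values are $p$-dominated on the one-dimensional subspace. The existence and sublinearity of $p$, and the squeeze description of norming functionals, are routine consequences of convexity and the triangle inequality. One could instead simply quote the standard convex-analysis facts $\partial\varphi(x_0)=\{f\in X^*:f\leqslant p\}$ and ``$\varphi$ is G\^{a}teaux-differentiable at $x_0$ iff $\partial\varphi(x_0)$ is a singleton'', but spelling out the argument as above avoids relying on external machinery.
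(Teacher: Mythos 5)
Your proof is correct and complete. Note that the paper does not actually prove this proposition: it states it as a known fact and refers to Beauzamy, pp.~179--180, so there is no in-paper argument to compare against. What you have written is the standard (Mazur/\v{S}mulian-type) proof that the reference itself gives: identify the norming functionals of $x_0$ with the subdifferential of the norm at $x_0$, i.e.\ with the linear functionals dominated by the sublinear one-sided derivative $p$, and then observe that Gâteaux smoothness is exactly linearity of $p$. The two delicate points are both handled properly: the squeeze $-p(-h)\leqslant f(h)\leqslant p(h)$ characterising norming functionals (your evaluation at $h=\pm x_0$ recovers $f(x_0)=\|x_0\|$ and $f\leqslant p\leqslant\|\cdot\|$ gives $\|f\|\leqslant 1$), and the converse direction, where the only computation that matters is the subadditivity inequality $p(h_0)+p(-h_0)\geqslant 0$ guaranteeing that the two prescribed values on $\mathbb{R}h_0$ are $p$-dominated, so that Hahn--Banach with majorant $p$ produces two distinct norming functionals. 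So your write-up supplies a self-contained proof where the paper relies on an external citation; the content is the expected one and there is no gap.
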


The other bit we require is Rademacher's theorem. Let us state it in the version for Riemannian manifolds, which will be convenient for us, since we shall apply it to a function defined on the Euclidean sphere.

\begin{thm}[Rademacher]Let $U$ be a Riemannian manifold and let $F\colon U\to \mathbb{R}^m$ be a locally Lipschitz map. Then $F$ is Fr\'{e}chet-differentiable (hence also G\^{a}teaux-differentiable) almost everywhere with respect to the Riemann--Lebesgue volume measure on $U$.\smallskip

In other words, the set of points in $U$ where $F$ is not differentiable is measurable and has measure zero.\end{thm}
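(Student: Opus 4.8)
The plan is to reduce the manifold statement to the classical Euclidean version and then sketch a proof of the latter. Since Fréchet-differentiability is a local notion and the charts of a Riemannian manifold are smooth, they are bi-Lipschitz on compact neighbourhoods; as a composition of locally Lipschitz maps is locally Lipschitz, and a smooth chart together with its inverse carries Lebesgue-null sets to Lebesgue-null sets, the non-differentiability set of $F$ reads, in each chart, as the image of the non-differentiability set of a locally Lipschitz map from an open subset of $\mathbb{R}^n$ into $\mathbb{R}^m$. Covering $U$ by countably many charts (using second countability) and recalling that a countable union of null sets is null, it therefore suffices to prove that a locally Lipschitz map $f\colon V\to\mathbb{R}^m$, with $V\subseteq\mathbb{R}^n$ open, is Fréchet-differentiable almost everywhere. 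First I would reduce to $m=1$ by treating the coordinate functions separately, since a map into $\mathbb{R}^m$ is differentiable exactly when all its coordinates are and a finite union of null sets is null.

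So fix a locally Lipschitz $f\colon V\to\mathbb{R}$. For a fixed direction $v$ in the unit sphere, the function $t\mapsto f(x+tv)$ is Lipschitz along each line, hence absolutely continuous and therefore differentiable for almost every $t$ by the one-dimensional theory; Fubini's theorem then shows that the directional derivative $D_vf(x)$ exists for almost every $x$. Specialising to the coordinate directions, the gradient $\nabla f(x)$ exists almost everywhere.

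The crucial step is to establish that, for each fixed $v$, one has $D_vf(x)=\langle\nabla f(x),v\rangle$ almost everywhere. I would argue in the distributional sense: for any test function $\varphi\in C_c^\infty(V)$ the difference quotients $\frac{f(x+tv)-f(x)}{t}$ are bounded by the local Lipschitz constant, so dominated convergence together with a change of variables yields, for every $v$ (in particular for the coordinate directions, which identifies the weak partials with the almost-everywhere partials),
$$\int_V D_vf\,\varphi\,dx = -\int_V f\,D_v\varphi\,dx = -\int_V f\,\langle\nabla\varphi,v\rangle\,dx = \int_V \langle\nabla f,v\rangle\,\varphi\,dx,$$
whence $D_vf=\langle\nabla f,v\rangle$ almost everywhere.

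Finally I would upgrade this to full differentiability. Fix a countable dense set $\{v_k\}$ of directions; there is a common full-measure set $E\subseteq V$ on which $\nabla f$ exists and $D_{v_k}f(x)=\langle\nabla f(x),v_k\rangle$ holds for all $k$. For $x\in E$ set
$$Q(v,t)=\frac{f(x+tv)-f(x)}{t}-\langle\nabla f(x),v\rangle.$$
The main obstacle — and the heart of Rademacher's theorem — is to control $Q$ uniformly in $v$, since this is precisely where mere existence of directional derivatives is insufficient and the Lipschitz hypothesis becomes indispensable. Here the Lipschitz bound makes $v\mapsto\frac{f(x+tv)-f(x)}{t}$ Lipschitz in $v$ with a constant independent of $t$, while $v\mapsto\langle\nabla f(x),v\rangle$ is Lipschitz as well; so $v\mapsto Q(v,t)$ is equi-Lipschitz in $v$. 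Combining this equicontinuity with the pointwise vanishing $Q(v_k,t)\to 0$ along the dense set $\{v_k\}$, a finite covering of the unit sphere by small balls centred at points of $\{v_k\}$ gives $Q(v,t)\to 0$ uniformly over the sphere as $t\to 0$. This uniform convergence is exactly the assertion that $f$ is Fréchet-differentiable at $x$ with derivative $v\mapsto\langle\nabla f(x),v\rangle$, which therefore holds for almost every $x$, completing the proof.
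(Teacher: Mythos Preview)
Your argument is correct and is essentially the standard proof of Rademacher's theorem (reduction via charts to the Euclidean case, then the classical three-step scheme: existence of directional derivatives by one-dimensional absolute continuity and Fubini, identification $D_vf=\langle\nabla f,v\rangle$ via integration against test functions and dominated convergence, and finally the equi-Lipschitz upgrade over a countable dense set of directions). One small point worth making explicit is that in the last two steps you tacitly use a \emph{global} Lipschitz bound; this is harmless because $\varphi$ has compact support in the distributional step, and differentiability is local in the final step, so one may always restrict to a relatively compact neighbourhood on which the locally Lipschitz $f$ is genuinely Lipschitz.

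As for the comparison: the paper does not give its own proof of this statement at all. It simply records the theorem and remarks that the manifold version ``follows directly from \cite[Theorem~3 on p.~250]{Stein}'', i.e.\ it quotes the Euclidean Rademacher theorem from Stein's book and leaves the passage to manifolds implicit. Your write-up therefore supplies strictly more than the paper does: you carry out the chart-reduction that the paper only alludes to, and you also sketch the Euclidean proof that the paper outsources to the reference. There is no methodological divergence to discuss, since the paper offers no method of its own here.
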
 
This more general form of the theorem follows directly from \cite[Theorem 3 on p.~250]{Stein}. We are now in a position to state and prove the main result of this section.

\begin{proposition}\label{zerom}Let $X$ be a finite-dimensional Banach space. Denote by $S_X$ the unit sphere of $X$. Let $\mu$ be the hypersurface measure on $S_X$. Set
$$W=\{x\in S_X\colon x\text{ has more than one norming functional}\}.$$
Then $W$ is $\mu$-measurable and $\mu(W)=0$.
\end{proposition}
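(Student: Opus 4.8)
The plan is to push the whole problem onto the Euclidean sphere and apply Rademacher's theorem there, using Proposition~\ref{norming} to turn ``more than one norming functional'' into ``non-differentiability of the norm''. Write $\nu(x)=\|x\|_X$ for $x\in\mathbb{R}^N\cong X$, and let $S$ be the Euclidean unit sphere of $\mathbb{R}^N$ with its Riemannian volume (hypersurface) measure $\mu_S$. Since $|\nu(x)-\nu(y)|\leqslant\nu(x-y)\leqslant C\|x-y\|_2$ with $C=\max_{\|z\|_2=1}\nu(z)$, the norm $\nu$ is globally Lipschitz on $\mathbb{R}^N$, and it is bounded below by $c_0:=\min_{\|z\|_2=1}\nu(z)>0$ on $S$; hence the radial retraction
$$\rho\colon S\to S_X,\qquad\rho(u)=u/\nu(u),$$
is a Lipschitz bijection with Lipschitz inverse $v\mapsto v/\|v\|_2$. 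A Lipschitz map increases $(N-1)$-dimensional Hausdorff measure by at most a fixed factor, and the hypersurface measures on $S_X$ and on $S$ are normalisations of the restrictions of $\mathcal{H}^{N-1}$; so it will be enough to show that $\rho^{-1}(W)$ is contained in a $\mu_S$-null subset of $S$. Then $\mu(W)=0$, and $W$ --- being a subset of an $\mathcal{H}^{N-1}$-null set (or, if one insists on the Borel $\sigma$-algebra, being the complement in $S_X$ of the G\^ateaux-differentiability set of the convex function $\nu$, which is a $G_\delta$) --- is $\mu$-measurable.

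The heart of the matter is the claim: \emph{if the restriction $g:=\nu|_S\colon S\to\mathbb{R}$ is differentiable at a point $u\in S$, then $\nu$ is G\^ateaux-differentiable at $u$, regarded as a function on $\mathbb{R}^N$.} To prove it, fix $v\in\mathbb{R}^N$ and write $v=su+w$ with $s=\langle v,u\rangle$ and $w=v-su\in u^{\perp}=T_uS$. For small $t$ one has $u+tv=(1+ts)\bigl(u+\tfrac{t}{1+ts}w\bigr)$, so by $1$-homogeneity $\nu(u+tv)=(1+ts)\,\nu(u+\tau w)$ with $\tau=\tfrac{t}{1+ts}=t+O(t^2)$. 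Writing $u+\tau w=\|u+\tau w\|_2\cdot\frac{u+\tau w}{\|u+\tau w\|_2}$, applying $1$-homogeneity once more and the differentiability of $g$ at $u$ in the chart $w\mapsto\frac{u+w}{\|u+w\|_2}$, and noting $\|u+\tau w\|_2=1+O(\tau^2)$, one gets $\nu(u+\tau w)=\nu(u)+\tau\,dg_u(w)+o(\tau)$ and hence
$$\nu(u+tv)=\nu(u)+t\bigl(s\,\nu(u)+dg_u(w)\bigr)+o(t).$$
So the two-sided directional derivative $\nu'(u;v)=\langle v,u\rangle\,\nu(u)+dg_u(v-\langle v,u\rangle u)$ exists and is linear in $v$, i.e.\ $\nu$ is G\^ateaux-differentiable at $u$; by $1$-homogeneity it is then G\^ateaux-differentiable at $\rho(u)$ as well, so Proposition~\ref{norming} gives that $\rho(u)$ has a unique norming functional, that is, $\rho(u)\notin W$.

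Thus $\rho^{-1}(W)\subseteq E:=\{u\in S:g\text{ is not differentiable at }u\}$. As $g$ is the restriction to $S$ of the globally Lipschitz $\nu$, it is locally Lipschitz on the Riemannian manifold $S$, so Rademacher's theorem in the version stated above yields that $E$ is $\mu_S$-measurable and $\mu_S(E)=0$. This completes the reduction and hence the proof. The step deserving the most attention is the measure-theoretic bookkeeping of the first paragraph --- one must be certain that ``hypersurface measure'' on the (only Lipschitz, possibly non-smooth) sphere $S_X$ is the restriction of $\mathcal{H}^{N-1}$, and that the radial retraction really transfers null sets in both directions; the Lipschitz estimates, the elementary expansion above, and the two cited theorems are routine. (One could instead apply Rademacher to $\nu$ on $\mathbb{R}^N$ and note that its non-differentiability set, being a cone of Lebesgue measure zero, meets $S_X$ in a $\mu$-null set; the spherical version is preferred here only because it dovetails with the preceding lemmas.)
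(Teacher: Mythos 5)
Your proof is correct and follows essentially the same route as the paper: transfer the problem to the Euclidean sphere via the bi-Lipschitz radial map, apply Rademacher's theorem there, invoke Proposition~\ref{norming}, and use the fact that bi-Lipschitz maps preserve null sets. You are in fact more careful than the paper on one point---the passage from differentiability of the restricted norm on the Euclidean sphere to G\^{a}teaux differentiability of the norm on $\mathbb{R}^N$, which you justify explicitly via $1$-homogeneity while the paper leaves it implicit.
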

\begin{proof}Fix an inner product in $X$ and let $S^{N-1}$ denote the unit sphere with respect to the Euclidean norm that comes from the inner product. Let $F\colon S^{N-1}\to S_X$ be given by
$$F(x) = \frac{x}{\|x\|_X}\quad (x\in S^{N-1}).$$
Since all norms in $X$ are equivalent, $F$ is a bi-Lipschitz homeomorphism. Moreover, $F$ is differentiable at $x_0\in S^{N-1}$ if and only if $x\mapsto \|x\|_X$ is differentiable at $x_0$. By Proposition~\ref{norming}, this is equivalent to the existence of a unique norming functional $x_0^*\in X^*$ for $x_0$. Thus
$$W=\{x\in S_X\colon F^{}\text{ is not differentiable at }F^{-1}(x)\}.$$
By Rademacher's theorem, the hypersurface measure of $F^{-1}(W)$ is zero. Since Lipschitz maps preserve null sets and $F$ is bi-Lipschitz, the set $W=F(F^{-1}(W))$ is $\mu$-measurable and $\mu(W)=0$.\end{proof}

\begin{lemma}\label{invariant}Let $X$ be an $N$-dimensional Banach space ($N\in \mathbb{N}$) with a 1-symmetric basis. Let $Q\in \mbox{BC}_N$ and $x\in X\setminus \{0\}$. If $f$ is a norming functional for $x$, then $Qf$ is a norming functional for $Qx$. Moreover, if $x$ has a unique norming functional, then so has $Qx$. \end{lemma}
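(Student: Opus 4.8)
The plan is to exploit two elementary facts about a signed permutation matrix $Q\in\mathrm{BC}_N$: that $Q$ is orthogonal, so $Q^TQ=I_N$ and $Q^{-1}=Q^T\in\mathrm{BC}_N$; and that, by the hypothesis that the canonical basis of $\mathbb{R}^N\cong X$ is $1$-symmetric, $\mathrm{BC}_N$ acts on $X$ by isometries, whence $\|Qy\|_X=\|y\|_X$ for every $y\in X$.

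First I would record that $\mathrm{BC}_N$ also acts on $X^*$ by isometries. Using the identification of the duality with the inner product, $\|f\|_{X^*}=\sup\{f^Tx:\|x\|_X\leqslant 1\}$, and for $Q\in\mathrm{BC}_N$ the substitution $z=Q^Tx$ together with the fact that $Q^T\in\mathrm{BC}_N$ maps the unit ball of $X$ onto itself gives $\|Qf\|_{X^*}=\sup\{f^Tz:\|z\|_X\leqslant 1\}=\|f\|_{X^*}$. (Equivalently, one may note that the biorthogonal functionals form a $1$-symmetric basis of $X^*$.) Then, given $x\neq 0$ and a norming functional $f$ for $x$, I compute $\langle Qf,Qx\rangle=(Qf)^T(Qx)=f^TQ^TQx=f^Tx=\langle f,x\rangle=\|x\|_X=\|Qx\|_X$, using orthogonality of $Q$ and then that $Q$ is an isometry of $X$. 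Since also $\|Qf\|_{X^*}=\|f\|_{X^*}=1$, this shows that $Qf$ is a norming functional for $Qx$.

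For the uniqueness assertion, suppose $x$ has the unique norming functional $x^*$ and let $g$ be any norming functional for $Qx$. Applying the first part to the matrix $Q^{-1}=Q^T\in\mathrm{BC}_N$, the vector $Qx$, and its norming functional $g$, we conclude that $Q^{-1}g$ is a norming functional for $Q^{-1}(Qx)=x$; by uniqueness $Q^{-1}g=x^*$, so $g=Qx^*$. Hence $Qx$ has the unique norming functional $Qx^*$. The only mildly delicate point is the passage to $X^*$, i.e. the observation that $\mathrm{BC}_N$ acts isometrically there as well; the remaining steps are a one-line computation with the relation $Q^TQ=I_N$.
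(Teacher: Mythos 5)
Your proof is correct and follows essentially the same route as the paper's: the one-line computation $\langle Qf,Qx\rangle=\langle Q^TQf,x\rangle=\langle f,x\rangle=\|x\|_X=\|Qx\|_X$, followed by transporting norming functionals back via $Q^T\in\mathrm{BC}_N$ to deduce uniqueness. If anything, you are slightly more careful than the paper in explicitly checking that $\|Qf\|_{X^*}=1$ (i.e.\ that $\mathrm{BC}_N$ acts isometrically on $X^*$), a point the paper's proof leaves implicit.
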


\begin{proof}We have $$\langle Qf, Qx \rangle = \langle Q^TQf, x\rangle = \langle f, x \rangle = \|x\|=\|Qx\|,$$where the last equality follows the fact that $Q$ is isometric which is the case in the presence of a $1$-symmetric basis. Now, suppose that $f$ and $g$ are norming functionals for $Qx$. Then $Q^Tf$ and $Q^Tg$ are norming functionals for $x$. As $Q$ is injective, $$\langle Q^Tf, x\rangle= \langle f, Qx\rangle = \|Qx\| = \langle g, Qx\rangle = \langle Q^Tg, x\rangle.$$
Hence $Q^Tf = Q^Tg$, so $f=g$. \end{proof}

\section{Proof of the Main Theorem}\label{proof}
We are now ready to prove the Main Theorem.

\begin{proof}For (i), it follows from Proposition~\ref{zerom} that there exists a set $D\subseteq S_X$ which is $\mu$-measurable, $\mu(S_X\setminus D)=0$ and the assignment $x\mapsto x^*$ is a well-defined function on $D$. By (the proof of) \cite[Proposition~1 on p.~177]{Beauzamy}, the map $x\mapsto x^*$ is continuous on $D$.\smallskip

Let $A$ be an ($N\times N$)-matrix. In order to prove (ii), note that by Lemma~\ref{invariant} and since ${\rm BC}_N$ acts by isometries on $X$, for each $Q\in {\rm BC}_N$ we have
$$\int\limits_{D} \langle A(Qx), (Qx)^*\rangle \, \mu({\rm d}x) = \int\limits_{D} \langle Ax, x^*\rangle \, \mu({\rm d}x)$$
and $$\langle A(Qx), (Qx)^*\rangle = \langle A(Qx), Q(x^*)\rangle = \langle Q^TAQx, x^*\rangle.$$

Summing the integrals over $Q\in {BC}_N$, we have
$$\sum_{Q\in {\rm BC}_N}\; \int\limits_{D} \langle Q^TAQx, x^*\rangle \, \mu({\rm d}x) = N!\cdot 2^N \int\limits_{D} \langle Ax, x^*\rangle \, \mu({\rm d}x),$$
hence
$$\int\limits_{D} \Big\langle \sum_{Q\in {\rm BC}_N} Q^TAQx, x^*\Big\rangle \, \mu({\rm d}x) = N!\cdot 2^N \int\limits_{D} \langle Ax, x^*\rangle \, \mu({\rm d}x).$$
By Proposition~\ref{mainlemma}, 
$$\sum_{Q\in {\rm BC}_N} Q^TAQ  = \Big((N-1)!\cdot 2^N\cdot {\rm tr}\,A\Big)I_N.$$
Consequently,
\begin{equation*}\begin{split}(N-1)!\cdot 2^N\cdot {\rm tr}\,A & = \int\limits_{D} \Big((N-1)!\cdot 2^N\cdot {\rm tr}\,A\Big)\langle x, x^*\rangle\, \mu({\rm d}x)\\
& = N!\cdot 2^N \int\limits_{D} \langle Ax, x^*\rangle \, \mu({\rm d}x),\end{split}\end{equation*}
which proves \eqref{new}.\end{proof}

\subsection*{Closing remarks}
We shall demonstrate that the hypothesis of having a 1-symmetric basis cannot be dropped in the statement of the Main Theorem. To see this, consider $\mathbb{R}^2$ endowed with the norm
$$ \|(x,y)\|_X = \sqrt{x^2 + \frac{y^2}{b^2}}\quad \big((x,y)\in \mathbb{R}^2\big).$$
For each $(x,y)\in S_X$, we have $(x,y)^* = (x, \frac{y}{b^2})$. Let $A=\left[\begin{smallmatrix}1&0\\0 &  0\end{smallmatrix}\right]$. Then, $\langle A(x,y), (x, \frac{y}{b^2})\rangle = x^2$. Let us parametrise the ellipse $S_X$ by $x(t) = \cos t, y(t) = b\sin t$. Then

\begin{equation*}\begin{split}\int\limits_{S_X} x^2\, \mu({\rm d}(x,y)) &  = \frac{\int\limits_0 ^{2 \pi} x(t) ^2 \sqrt {x'(t) ^2 + y' (t) ^2}\, {\rm d} t}{\int\limits_0^{2 \pi} \sqrt{x'(t) ^2 + y' (t) ^2} \,{\rm d t}}  \\
& = \frac{\int \limits _0 ^{2 \pi} \cos ^2 t \sqrt {\sin ^2 t + b^2 \cos ^2 t}\, {\rm d} t}{\int\limits_0^{2\pi} \sqrt{\sin ^2 t + b^2 \cos ^2 t}\, {\rm d} t}  
\end{split}\end{equation*}
When $b=1$ the average is $1/2$, which is $\frac{1}{2} {\rm tr}\, A$ as it should be since the norm is the $\ell_2$-norm. The integrals are continuous in the parameter $b$ and for $b=0$ the average is 
\begin{equation*}
    \frac{4 \int_0^\frac{\pi}{2} \cos^2 t \, \sin t \, {\rm d}t}{4 \int_0^\frac{\pi}{2} \sin t \,{\rm d}t}=\frac{1}{3},
\end{equation*}
and so there are values of $b$ for which the average is not $1/2$.

\end{document}